\newtheorem{thm}{Theorem}[section]
\newtheorem{lem}{Lemma}[section]
\newtheorem{pro}{Proposition}[section]
\theoremstyle{abstract}
\theoremstyle{remark}
\newtheorem*{rema}{Remark}
\newtheorem{exa}{\textbf{Example}}
\theoremstyle{definition}
\newcommand{\R}{\mathbb{R}}
\newcommand{\N}{\mathbb{N}}
\makeatletter \@namedef{subjclassname@2010}{
  \textup{2010} Mathematics Subject Classification}
\begin{document}

\title[On Kaplansky Theorem]{Generalizations of Kaplansky Theorem Related to Linear Operators}
\author[Benali and Mortad]{Abdelkader Benali and Mohammed Hichem Mortad $^*$}

\address{Department of
Mathematics, University of Oran, B.P. 1524, El Menouar, Oran 31000.
Algeria.\newline {\bf Mailing address}:
\newline Dr Mohammed Hichem Mortad \newline BP 7085
Es-Seddikia\newline Oran
\newline 31013 \newline Algeria}

\email{nasro-91@hotmail.fr}

\email{mhmortad@gmail.com, mortad@univ-oran.dz.}

\begin{abstract}
The purpose of this paper is to generalize a very famous result on
products of normal operators, due to I. Kaplansky. The context of
generalization is that of bounded hyponormal and unbounded normal
operators on complex separable Hilbert spaces. Some examples "spice
up" the paper.
\end{abstract}

\subjclass[2010]{Primary 47B20; Secondary 47A05}

\keywords{Products of operators. Bounded and unbounded: Normal,
hyponormal, subnormal operators. Kaplansky theorem. Fuglede-Putnam
theorem.}

\thanks{$*$ Corresponding author.}

\maketitle

\section{Introduction}

Normal operators are a major class of bounded and unbounded
operators. Among their virtues, they are the largest class of single
operators for which the spectral theorem is proved (cf.
\cite{SCHMUDG-book-2012}). There are other classes of interesting
non-normal operators such as hyponormal and subnormal operators
(among others). They have been of interest to many mathematicians
and have been extensively investigated enough so that even
monographs have been devoted to them. See for instance
\cite{CON-subnormal book} and \cite{Martin-Putinar-hyponormal
operators book}.

In this paper we are mainly interested in generalizing the following
result to unbounded normal and bounded hyponormal operators:

\begin{thm}[Kaplansky, \cite{Kapl}]\label{Kapl-bounded}
Let $A$ and $B$ be two bounded operators on a Hilbert space such
that $AB$ and $A$ are normal. Then $B$ commutes with $AA^*$ iff $BA$
is normal.
\end{thm}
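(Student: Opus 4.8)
The plan is to prove the two implications by rather different means: the implication ``$BA$ normal $\Rightarrow$ $B$ commutes with $AA^{*}$'' is essentially a one-line consequence of the Fuglede--Putnam theorem, while the reverse implication is where the work lies and will go through the polar decomposition of the normal operator $A$.

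For the first implication I would exploit the trivial intertwining identity $A(BA)=(AB)A$. Under the hypotheses of that implication the operators $A$, $AB$ and $BA$ are all normal, so Putnam's form of the Fuglede--Putnam theorem applies with intertwiner $T=A$ and gives $A(BA)^{*}=(AB)^{*}A$, i.e. $AA^{*}B^{*}=B^{*}A^{*}A$. Since $A$ is normal, $A^{*}A=AA^{*}$, so $B^{*}$ commutes with $AA^{*}$, and taking adjoints $B$ commutes with $AA^{*}$.

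For the reverse implication, set $P:=AA^{*}=A^{*}A$ (equal since $A$ is normal) and assume $BP=PB$. Then $B$ and $B^{*}$ commute with $P$, hence with $|A|=P^{1/2}$ by the continuous functional calculus. Write the polar decomposition $A=U|A|$; because $A$ is normal one has $\ker A=\ker A^{*}$, so $U$ can be chosen unitary, and normality also gives $U|A|=|A|U$. Expanding the normality relation $(AB)^{*}(AB)=(AB)(AB)^{*}$ and pushing $B,B^{*}$ past $|A|$ turns it (using that $A$ is normal) into $|A|^{2}B^{*}B=|A|^{2}UBB^{*}U^{*}$; conjugating this identity by $U$ and using $U|A|^{2}U^{*}=|A|^{2}$ yields $|A|^{2}U^{*}B^{*}BU=|A|^{2}BB^{*}$. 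Now one computes $(BA)^{*}(BA)=A^{*}B^{*}BA=|A|(U^{*}B^{*}BU)|A|=|A|^{2}U^{*}B^{*}BU=|A|^{2}BB^{*}$, the middle equality because $U^{*}B^{*}BU$ commutes with $|A|$, and also $(BA)(BA)^{*}=BAA^{*}B^{*}=(AA^{*})BB^{*}=|A|^{2}BB^{*}$ because $B$ commutes with $AA^{*}$. Comparing the two displays shows $BA$ is normal.

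The genuinely delicate point is the reverse implication, and inside it the handling of the polar decomposition: one must use that $A$ normal forces $\ker A=\ker A^{*}$, so that the partial isometry is actually unitary (otherwise stray terms supported on $\ker A$ survive the computation), that this $U$ really commutes with $|A|$, and that $B$ inherits commutation with $|A|$ from commutation with $AA^{*}$. These are precisely the places where it would be tempting, but wrong, to argue as if $A$ were invertible; the remaining work --- bookkeeping of which operators commute with which --- is routine.
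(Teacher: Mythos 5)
Your proof is correct, and it follows essentially the same route the paper takes in proving its generalizations (the paper states Kaplansky's theorem itself without proof): the Fuglede--Putnam theorem applied to the intertwining $A(BA)=(AB)A$ for the forward direction, and the polar decomposition $A=U|A|=|A|U$ with $U$ unitary and $B$, $B^*$ commuting with $|A|$ for the reverse. The only difference is cosmetic: once you know that $B$ and $U$ both commute with $|A|$, you have $U^*(AB)U=U^*U|A|BU=B|A|U=BU|A|=BA$, so $BA$ is unitarily equivalent to the normal operator $AB$ and your longer expansion of $(AB)^*(AB)=(AB)(AB)^*$ can be skipped.
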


Before recalling some essential background, we make the following
observation:

\textit{All operators are linear and are defined on a separable
complex Hilbert space, which we will denote henceforth by $H$.}

A bounded operator $A$ on $H$ is said to be normal if $AA^*=A^*A$.
$A$ is called hyponormal if $AA^*\leq A^*A$, that is iff
$\|A^*x\|\leq\|Ax\|$ for all $x\in H$. Hence a normal operator is
always hyponormal. Obviously, a hyponormal operator need not be
normal. However, and in a finite-dimensional setting, a hyponormal
operator is normal too. This is proved via a nice and simple trace
argument (see e.g. \cite{halmos-book-1982}).

Since the paper is also concerned with unbounded operators, and for
the readers convenience, we recall some known notions and results
about unbounded operators.

If $A$ and $B$ are two unbounded operators with domains $D(A)$ and
$D(B)$ respectively, then $B$ is said to be an extension of $A$, and
we denote it by $A\subset B$, if $D(A)\subset D(B)$ and $A$ and $B$
coincide on each element of $D(A)$. An operator $A$ is said to be
densely defined if $D(A)$ is dense in $H$. The (Hilbert) adjoint of
$A$ is denoted by $A^*$ and it is known to be unique if $A$ is
densely defined. An operator $A$ is said to be closed if its graph
is closed in $H\times H$. We say that the unbounded $A$ is
self-adjoint if $A=A^*$, and we say that it is normal if $A$ is
closed and $AA^*=A^*A$. Recall also that the product $BA$ is closed
if for instance $B$ is closed and $A$ is bounded, and that if $A$,
$B$ and $AB$ are densely-defined, then only $B^*A^*\subset (AB)^*$
holds; and if further $A$ is assumed to be bounded, then
$B^*A^*=(AB)^*$.

The notion of hyponormality extends naturally to unbounded
operators. an unbounded $A$ is called hyponormal if:
\begin{enumerate}
  \item $D(A)\subset D(A^*)$,
  \item $\|A^*x\|\leq\|Ax\|$ for all $x\in D(A)$.
\end{enumerate}

It is also convenient to recall the following theorem which appeared
in \cite{STO}, but we state it in the form we need.
\begin{thm}[Stochel]\label{STOCHEL-ASYMMETRIC}
If $T$ is a closed subnormal (resp. closed hyponormal) operator and
$S$ is a closed hyponormal (resp. closed subnormal) operator
verifying $XT^*\subset SX$ where $X$ is a bounded operator, then
both $S$ and $T^*$ are normal once $\ker X=\ker X^*=\{0\}$.
\end{thm}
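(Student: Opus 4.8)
The plan is to reduce the problem to the Fuglede--Putnam circle of ideas by replacing the subnormal operator by its minimal normal extension, then to invoke an \emph{asymmetric} Fuglede--Putnam theorem (one normal, one hyponormal operator) to transport the intertwining relation onto the adjoints, and finally to squeeze the resulting equalities against hyponormality. First I would observe that the two statements hidden in the ``resp.'' are equivalent: since $X$ is bounded and $T,S$ are closed and densely defined, passing to adjoints in $XT^*\subset SX$ gives $(SX)^*\subset(XT^*)^*=TX^*$, hence $X^*S^*\subset TX^*$, and as $\ker X=\ker X^*=\{0\}$ this displays the hypotheses as invariant under $X\mapsto X^*$, $T\leftrightarrow S$; since a closed operator is normal iff its adjoint is, the conclusions match as well. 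So I may assume $T$ is closed subnormal and $S$ is closed hyponormal.

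Let $N$ be the minimal normal extension of $T$, acting on $K\supseteq H$, with $P\colon K\to H$ the orthogonal projection and $\iota\colon H\hookrightarrow K$ the inclusion. From $X^*S^*\subset TX^*$ and $T\subset N$ one obtains $(\iota X^*)S^*\subset N(\iota X^*)$, and passing to adjoints again yields $\widetilde X N^*\subset S\widetilde X$, where $\widetilde X:=XP\colon K\to H$ is bounded with $\ran\widetilde X=\ran X$ dense and $\ker\widetilde X=H^\perp$. The crucial step is then the asymmetric Fuglede--Putnam theorem for unbounded operators: applied to the normal operator $N^*$ and the hyponormal operator $S$, it upgrades $\widetilde X N^*\subset S\widetilde X$ to $\widetilde X N\subset S^*\widetilde X$. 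Running both relations through $\ker\widetilde X=H^\perp$ shows that $H^\perp$ is invariant under $N$ and under $N^*$, hence reduces $N$; minimality of the normal extension then forces $K=H$, so that $\widetilde X=X$ and we are left with $XN^*\subset SX$ and $XN\subset S^*X$, where $N$ is normal on $H$ and $X$ is a quasiaffinity (injective with dense range).

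To produce normality I would proceed as follows. Passing to adjoints in the two relations and combining gives $X^*XN^*\subset N^*X^*X$, so $X^*X$ commutes with the spectral measure of $N$; this forces $\|SXu\|=\|S^*Xu\|$ for every $u\in D(N)$, while identities such as $(NN^*+I)X^*w=0$, obtained from graph-orthogonality, show that $X(D(N))$ is a common core for $S$ and for $S^*$. Formal normality on this core, together with hyponormality, then yields $D(S)=D(S^*)$ and $\|Sv\|=\|S^*v\|$ on $D(S)$, i.e. $S$ is normal; equivalently, the same adjoint manipulations show that $S$ and $N^*$ are quasisimilar, and a hyponormal operator quasisimilar to a normal one is normal. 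Finally, with $S$ normal, the original relation $XT^*\subset SX$ produces, via the asymmetric Fuglede--Putnam step applied to the normal $S^*$ and the hyponormal $T$, the relation $X^*S\subset T^*X^*$; together with $XT^*\subset SX$ this makes $T^*$ quasisimilar to $S$ (equivalently $T$ quasisimilar to $S^*$), so the same principle gives that $T$, hence $T^*$, is normal.

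The hard part will be the asymmetric Fuglede--Putnam step in the unbounded setting. The classical Rosenblum argument --- noting that $e^{-izS}Xe^{izN}\equiv X$ and then analysing the entire function $z\mapsto e^{izS^*}Xe^{-izN^*}$ via a Liouville/Phragm\'en--Lindel\"of argument --- must be carried out when $S$ is merely hyponormal and when $N$ and $S$ may be unbounded, which compels one to use the spectral measure of $N$ instead of a bounded holomorphic functional calculus and to control $e^{izS}$ for hyponormal $S$ by other means. The domain bookkeeping in the passage to the minimal normal extension and in the core and quasisimilarity arguments is routine but must be carried out with care, since several of the operators involved are unbounded.
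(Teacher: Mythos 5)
The first thing you should know is that the paper does not prove this statement at all: it is Stochel's asymmetric Putnam--Fuglede theorem, quoted from \cite{STO} in the form the authors need later, and used as a black box. So there is no in-paper proof to compare your attempt against; what you wrote has to stand on its own as a proof of a known but genuinely difficult theorem.

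As it stands it does not. The decisive step --- the asymmetric Fuglede--Putnam theorem for an unbounded normal operator against an unbounded hyponormal one, which you invoke to pass from $\widetilde{X}N^*\subset S\widetilde{X}$ to $\widetilde{X}N\subset S^*\widetilde{X}$ --- carries essentially the entire weight of the theorem, and you explicitly defer it (``the hard part will be\dots''). Rosenblum's exponential argument does not survive the passage to a merely hyponormal, unbounded $S$ (there is no usable control of $e^{izS}$ for such $S$), and you offer no substitute; this is precisely where Stochel's paper does its real work. Beyond that, several intermediate claims are asserted rather than established and are themselves delicate: (i) to get $K=H$ from minimality you need $H^\perp$ to \emph{reduce} $N$, whereas your kernel argument only shows that $N$ and $N^*$ map $H^\perp\cap D(N)$, respectively $H^\perp\cap D(N^*)$, into $H^\perp$, which for unbounded $N$ is strictly weaker; (ii) even granting $K=H$ you only obtain $T\subset N$ with $N$ normal, and concluding that $T^*$ is normal forces $T=N$, a maximality statement that needs its own argument in the unbounded setting; (iii) ``a hyponormal operator quasisimilar to a normal one is normal'' is a nontrivial theorem even for bounded operators, and you would need an unbounded version of it. The overall architecture --- replace the subnormal operator by its minimal normal extension, transport the intertwining relation to adjoints, squeeze against hyponormality --- is the right circle of ideas and close in spirit to Stochel's strategy, but none of the load-bearing steps is actually carried out, so this is a plan rather than a proof.
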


Any other result or notion (such as the classical Fuglede-Putnam
theorem, the polar decomposition, subnormality etc...) will be
assumed to be known by readers. For more details, the interested
reader is referred to \cite{Bereb_book}, \cite{CON}, \cite{GGK},
\cite{RUD} and \cite{SCHMUDG-book-2012}. For other works related to
products of normal (bounded and unbounded) operators, the reader may
consult \cite{Gheondea}, \cite{kitt-prod-norm},
\cite{Mortad-Demmath}, \cite{mortad-commutatvity-devinatz-2013} and
\cite{patel-Ramanujan-1981-sum-product}, and the references therein.

\section{Main Results: The Bounded Case}

The following known lemma is essential (we include a proof):

\begin{lem}\label{Lemma A>B UAU*>UBU*}
Let $S$ and $T$ be two bounded self-adjoint operators on a Hilbert
space $H$. If $U$ is any operator, then
\[S\geq T \Longrightarrow USU^*\geq UTU^*.\]
\end{lem}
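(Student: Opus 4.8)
The plan is to reduce the statement to the positivity of a single operator. Setting $R:=S-T$, the hypothesis $S\geq T$ says precisely that $R$ is a bounded self-adjoint operator with $\langle Rx,x\rangle\geq 0$ for every $x\in H$, i.e. $R\geq 0$. Since $U$ is bounded (which is the case in every application of the lemma below), linearity and distributivity of operator multiplication give $USU^*-UTU^*=U(S-T)U^*=URU^*$, and $URU^*$ is again bounded and self-adjoint because $(URU^*)^*=UR^*U^*=URU^*$. So the whole claim amounts to showing $R\geq 0\Longrightarrow URU^*\geq 0$.

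For that, I would simply test against an arbitrary vector: for $x\in H$,
\[
\langle URU^*x,x\rangle=\langle RU^*x,(U^*)^*x\rangle=\langle R(U^*x),U^*x\rangle\geq 0,
\]
where the first equality uses that $U$ is bounded so that $(U^*)^*=U$, and the inequality is just $R\geq 0$ applied to the vector $y:=U^*x\in H$. This proves $URU^*\geq 0$, hence $USU^*\geq UTU^*$.

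An equally short alternative, which I might prefer to record, is to invoke the positive square root: since $R\geq 0$ is bounded self-adjoint, it has a bounded self-adjoint square root $R^{1/2}$ with $R=(R^{1/2})^2$, and then
\[
URU^*=UR^{1/2}R^{1/2}U^*=\bigl(R^{1/2}U^*\bigr)^*\bigl(R^{1/2}U^*\bigr)\geq 0,
\]
since any operator of the form $C^*C$ is positive.

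There is essentially no serious obstacle here; the only point requiring a word of care is the manipulation $USU^*-UTU^*=U(S-T)U^*$ and the identity $(U^*)^*=U$, both of which are automatic for bounded $U$. (If one wanted $U$ unbounded, domain bookkeeping for $USU^*$ would be the delicate part, but that generality is not needed.)
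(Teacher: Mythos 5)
Your argument is correct and is essentially the paper's own proof: the authors likewise compute $\langle USU^*x,x\rangle=\langle SU^*x,U^*x\rangle\geq\langle TU^*x,U^*x\rangle=\langle UTU^*x,x\rangle$, which is exactly your inner-product test applied to $R=S-T$. The square-root alternative you mention is a harmless variant, but the core reduction and computation coincide with the paper's.
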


\begin{proof}
Let $x\in H$. We have
\begin{align*}
<USU^*x,x>= & <SU^*x,U^*x>\\
\geq & <TU^*x,U^*x>\\
=&<UTU^*x,x>.
\end{align*}
\end{proof}

As a direct application of the previous result we have the following
Kaplansky-like theorem:

\begin{pro}\label{normal A hypo Ab P2B=BP2 implying BA hypo
Proposition} Let $A$ and $B$ be two bounded operators on a Hilbert
space such that $A$ is normal and $AB$ is hyponormal. Then
\[AA^*B=BAA^*\Longrightarrow\text{ $BA$ is hyponormal.}\]
\end{pro}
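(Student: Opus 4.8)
The plan is to realize $BA$ as a unitary conjugate of $AB$ and then invoke Lemma~\ref{Lemma A>B UAU*>UBU*}. Recall the elementary characterization of hyponormality: a bounded operator $X$ is hyponormal if and only if $X^*X\geq XX^*$ in the order of self-adjoint operators. So it is enough to deduce $(BA)^*(BA)\geq(BA)(BA)^*$ from the given $(AB)^*(AB)\geq(AB)(AB)^*$.

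First I would put $P:=|A|=(A^*A)^{1/2}$. Normality of $A$ gives $A^*A=AA^*=P^2$ and, crucially, a polar decomposition $A=UP=PU$ in which $U$ may be taken \emph{unitary} (decompose $A=\int\lambda\,dE(\lambda)$ via the spectral theorem and split $\lambda=e^{i\arg\lambda}|\lambda|$); this is the only place where normality of $A$ is genuinely used. The hypothesis $AA^*B=BAA^*$ says precisely that $B$ commutes with $P^2$, hence with $P$ by the continuous functional calculus, hence $B^*$ commutes with $P$ as well. With this the key identity is immediate: since $U^*A=U^*UP=P$, $PB=BP$ and $PU=A$,
\[
U^*(AB)U=(U^*A)BU=PBU=BPU=BA.
\]
Thus $BA$ and $AB$ are unitarily equivalent. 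Applying Lemma~\ref{Lemma A>B UAU*>UBU*} with the operator $U^*$ to the inequality $(AB)^*(AB)\geq(AB)(AB)^*$ yields $U^*(AB)^*(AB)U\geq U^*(AB)(AB)^*U$, and since $U$ is unitary these two operators are exactly $(BA)^*(BA)$ and $(BA)(BA)^*$. Hence $BA$ is hyponormal.

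The step that I expect to need the most care is the assertion that the partial isometry in the polar decomposition of $A$ can be taken unitary while still commuting with $P$. This relies on the fact that for normal $A$ one has $\ker A=\ker A^*$, so the partial isometry extends to a unitary (its initial and final defect spaces are both $\ker A$), and the extension still commutes with $P=|A|$ because $P$ annihilates $\ker A$. One could instead carry the partial isometry through the whole computation, noting that the range projection $Q$ onto $\overline{\operatorname{ran}A}$ satisfies $Q\,AB=AB$ so that the conjugation still collapses correctly, but the unitary version keeps the bookkeeping minimal. I would also observe that the identity $U^*(AB)U=BA$ is symmetric in $AB$ and $BA$, so under the standing hypotheses $AB$ is hyponormal if and only if $BA$ is.
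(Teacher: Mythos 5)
Your proof is correct and follows essentially the same route as the paper: write $A=UP=PU$ with $U$ unitary via normality, deduce $PB=BP$ from $P^2B=BP^2$, obtain $U^*(AB)U=BA$, and transport the hyponormality inequality through the unitary conjugation using Lemma~\ref{Lemma A>B UAU*>UBU*}. Your extra care in justifying that the partial isometry can be taken unitary (and still commuting with $P$) is a welcome addition, since the paper simply asserts this decomposition.
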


\begin{proof} Since $A$ is normal, we know that
\[A=PU=UP\]
where $P$ is positive and $U$ is unitary. Hence
\[AA^*B=BAA^*\Longrightarrow P^2B=BP^2\Longrightarrow PB=BB\]
so that
\[U^*ABU=U^*UPBU=PBU=BA.\]

Finally, we have
\begin{align*}
BA(BA)^*&=(U^*(AB)U)(U^*ABU)^*\\
&=U^*ABUU^*(AB)^*U\\
&=U^*(AB)(AB)^*U\\
&\leq U^*(AB)^*ABU\\
&=(BA)^*BA.
\end{align*}
\end{proof}

The reverse implication does not hold in the previous result (even
if $A$ is self-adjoint) as shown in the following example:

\begin{exa}
Let $A$ and $B$ be acting on the standard basis $(e_n)$ of
$\ell^2(\N)$ by:
\[Ae_n=\alpha_ne_n \text{ and } Be_n=e_{n+1},~\forall n\geq 1\]
respectively. Assume further that $\alpha_n$ is bounded,
\textit{real-valued} and \textit{positive}, for all $n$. Hence $A$
is self-adjoint (hence normal!) and positive. Then
\[ABe_n=\alpha_{n}e_{n+1},~\forall n\geq 1.\]

For convenience, let us carry out the calculations as infinite
matrices. Then

\[ AB=\begin{bmatrix} 0 & 0 &  &  & &\text{\Large{0}} \\ \alpha_1 & 0 &
0 &  &
\\ 0 & \alpha_2 & 0 & 0 &
 \\
 & 0 & \alpha_3 & 0 & \ddots \\
 &  & 0 & \ddots & 0 & \\
\text{\Large{0}} &  &  & \ddots & \ddots & \ddots
\end{bmatrix} \text{ so that } (AB)^*=\begin{bmatrix} 0 & \alpha_1 &  &  & &\text{\Large{0}} \\ 0 & 0 &
\alpha_2 &  &
\\ 0 & 0 & 0 & \alpha_3 &
 \\
 & 0 & 0 & 0 & \ddots \\
 &  & 0 & \ddots & 0 & \\
\text{\Large{0}} &  &  & \ddots & \ddots & \ddots
\end{bmatrix}.\]
Hence
\[ AB(AB)^*=\begin{bmatrix} 0 & 0 &  &  & &\text{\Large{0}} \\ 0 &\alpha_1^2 &
0 &  &
\\ 0 & 0 & \alpha_2^2 & 0 &
 \\
 & 0 & 0 & \alpha_3^2 & \ddots \\
 &  & 0 & \ddots & \ddots & \\
\text{\Large{0}} &  &  & \ddots & \ddots & \ddots
\end{bmatrix}\] \text{ and }
\[(AB)^*AB=\begin{bmatrix} \alpha_1^2 & 0 &  &  & &\text{\Large{0}} \\ 0
&\alpha_2^2 & 0 &  &
\\ 0 & 0 & \alpha_3^2 & 0 &
 \\
 & 0 & 0 & \ddots & \ddots \\
 &  & 0 & \ddots & \ddots & \\
\text{\Large{0}} &  &  & \ddots & \ddots & \ddots
\end{bmatrix}.\]
It thus becomes clear that $AB$ is hyponormal iff $\alpha_n\leq
\alpha_{n+1}$.

Similarly
\[BAe_n=\alpha_{n+1}e_{n+1},~\forall n\geq 1.\]
Whence the matrix representing $BA$ is given by:
\[BA=\begin{bmatrix} 0 & 0 &  &  & &\text{\Large{0}} \\ \alpha_2 & 0 &
0 &  &
\\ 0 & \alpha_3 & 0 & 0 &
 \\
 & 0 & \alpha_4 & 0 & \ddots \\
 &  & 0 & \ddots & 0 & \\
\text{\Large{0}} &  &  & \ddots & \ddots & \ddots
\end{bmatrix} \text{ so that } (BA)^*=\begin{bmatrix} 0 & \alpha_2 &  &  & &\text{\Large{0}} \\ 0 & 0 &
\alpha_3 &  &
\\ 0 & 0 & 0 & \alpha_4 &
 \\
 & 0 & 0 & 0 & \ddots \\
 &  & 0 & \ddots & 0 & \\
\text{\Large{0}} &  &  & \ddots & \ddots & \ddots
\end{bmatrix}.\]
Therefore,
\[BA(BA)^*=\begin{bmatrix} 0 & 0 &  &  & &\text{\Large{0}} \\ 0 &\alpha_2^2 &
0 &  &
\\ 0 & 0 & \alpha_3^2 & 0 &
 \\
 & 0 & 0 & \alpha_4^2 & \ddots \\
 &  & 0 & \ddots & \ddots & \\
\text{\Large{0}} &  &  & \ddots & \ddots & \ddots
\end{bmatrix}\]
and
\[(BA)^*BA=\begin{bmatrix} \alpha_2^2 & 0 &  &  & &\text{\Large{0}} \\ 0
&\alpha_3^2 & 0 &  &
\\ 0 & 0 & \alpha_4^2 & 0 &
 \\
 & 0 & 0 & \ddots & \ddots \\
 &  & 0 & \ddots & \ddots & \\
\text{\Large{0}} &  &  & \ddots & \ddots & \ddots
\end{bmatrix}.\]
Accordingly, $BA$ is hyponormal iff $\alpha_n\leq \alpha_{n+1}$
(thankfully, this is the same condition for the hyponormality of
$AB$).

Finally,

\[BA^2=\begin{bmatrix} 0 & 0 &  &  & &\text{\Large{0}} \\ \alpha_1^2 & 0 &
0 &  &
\\ 0 & \alpha_2^2 & 0 & 0 &
 \\
 & 0 & \alpha_3^2 & 0 & \ddots \\
 &  & 0 & \ddots & 0 & \\
\text{\Large{0}} &  &  & \ddots & \ddots & \ddots
\end{bmatrix}\neq A^2B=\begin{bmatrix} 0 & 0 &  &  & &\text{\Large{0}} \\ \alpha_2^2 & 0 &
0 &  &
\\ 0 & \alpha_3^2 & 0 & 0 &
 \\
 & 0 & \alpha_4^2 & 0 & \ddots \\
 &  & 0 & \ddots & 0 & \\
\text{\Large{0}} &  &  & \ddots & \ddots & \ddots
\end{bmatrix}\]

\begin{rema}
An explicit example of such an $(\alpha_n)$ verifying the required
hypotheses would be to take:
\[\left\{\begin{array}{c}
           \alpha_1=0 \\
           \alpha_{n+1}=\sqrt{2+\alpha_n}
         \end{array}
\right.\] Then $(\alpha_n)$ is bounded (in fact, $0\leq \alpha_n<2$,
for all $n$), increasing and such that $\alpha_1=0\neq
\alpha_2=\sqrt2$.
\end{rema}
\end{exa}

Going back to Proposition \ref{normal A hypo Ab P2B=BP2 implying BA
hypo Proposition}, we observe that the result obviously holds by
replacing "hyponormal" by "co-hyponormal". Thus we have

\begin{pro}Let $A$ and $B$ be two bounded operators on a Hilbert
space such that $A$ is normal and $AB$ is co-hyponormal. Then
\[AA^*B=BAA^*\Longrightarrow\text{ $BA$ is co-hyponormal.}\]
\end{pro}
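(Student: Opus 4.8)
The plan is to mimic, almost verbatim, the proof of Proposition~\ref{normal A hypo Ab P2B=BP2 implying BA hypo Proposition}, simply reversing the operator inequality at the single place where the hyponormality of $AB$ intervenes. First, since $A$ is normal we use its polar decomposition
\[A=PU=UP,\]
where $P\geq 0$ and $U$ is unitary. From the hypothesis $AA^*B=BAA^*$ we obtain $P^2B=BP^2$, and since $P$ is the unique positive square root of $P^2$ (hence a norm-limit of polynomials in $P^2$), this upgrades to $PB=BP$. Combining $PB=BP$ with $PU=UP$ gives, exactly as in the cited proof,
\[U^*(AB)U=U^*UPBU=PBU=BPU=BUP=BA.\]

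Next, using that $U$ is bounded (so that $(BA)^*=(U^*(AB)U)^*=U^*(AB)^*U$) together with $UU^*=U^*U=I$, I would compute
\[BA(BA)^*=\bigl(U^*(AB)U\bigr)\bigl(U^*(AB)^*U\bigr)=U^*(AB)(AB)^*U\]
and, in the same way,
\[(BA)^*BA=U^*(AB)^*(AB)U.\]
The only new ingredient compared to Proposition~\ref{normal A hypo Ab P2B=BP2 implying BA hypo Proposition} is that ``$AB$ co-hyponormal'' means precisely $(AB)^*(AB)\leq(AB)(AB)^*$. Applying Lemma~\ref{Lemma A>B UAU*>UBU*} to the two bounded self-adjoint operators $(AB)^*(AB)$ and $(AB)(AB)^*$, conjugated by the operator $U^*$, we get
\[U^*(AB)^*(AB)U\leq U^*(AB)(AB)^*U,\]
that is, $(BA)^*BA\leq BA(BA)^*$, which is exactly the co-hyponormality of $BA$.

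I do not expect any genuine obstacle here: the argument is a transcription of the hyponormal case with the inequality flipped, and the only step worth a moment's care is the implication $P^2B=BP^2\Rightarrow PB=BP$, which rests on the functional calculus for the positive operator $P$. Everything else is formal manipulation with the polar decomposition and a single invocation of Lemma~\ref{Lemma A>B UAU*>UBU*}.
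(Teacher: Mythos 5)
Your proof is correct and follows essentially the same route as the paper, which simply observes that the hyponormal-case argument (polar decomposition $A=PU=UP$, upgrading $P^2B=BP^2$ to $PB=BP$, the identity $U^*(AB)U=BA$, and one application of Lemma~\ref{Lemma A>B UAU*>UBU*}) goes through verbatim with the inequality reversed. In fact you supply more detail than the paper does, since the paper gives no explicit proof for the co-hyponormal variant.
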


\begin{rema}
The same previous example, mutatis mutandis, works as a
counterexample to show that "$BA$ co-hyponormal $\Rightarrow
AA^*B=BAA^*$" need not hold.
\end{rema}

We now come to a very important result of the paper:

\begin{thm}\label{main theorem hyponormal cohyponormal}
Let $A,B$ be two bounded operators such that $A$ is also normal.
Assume that $AB$ is hyponormal and that $BA$ is co-hyponormal. Then
\[AA^*B=BA^*A \Longleftrightarrow BA \text{ and } AB \text{ are normal. }\]
\end{thm}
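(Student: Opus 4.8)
The plan is to treat the two implications separately, using Proposition~\ref{normal A hypo Ab P2B=BP2 implying BA hypo Proposition} (and its co-hyponormal twin) for the forward direction, and Stochel's theorem (Theorem~\ref{STOCHEL-ASYMMETRIC}) together with Fuglede--Putnam for the reverse direction.

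For the implication ``$AA^*B=BA^*A\Rightarrow BA,AB$ normal'': since $A$ is normal, $AA^*=A^*A$, so the hypothesis reads $AA^*B=BAA^*$, i.e. $B$ commutes with $AA^*=P^2$ in the polar-decomposition notation $A=PU=UP$. First I would invoke Proposition~\ref{normal A hypo Ab P2B=BP2 implying BA hypo Proposition}: from $AB$ hyponormal and $P^2B=BP^2$ we get that $BA$ is hyponormal; and from the co-hyponormal Proposition applied to $AB$ co-hyponormal -- wait, here it is $BA$ that is assumed co-hyponormal, so I would instead run the same argument ``backwards'': the computation in Proposition~\ref{normal A hypo Ab P2B=BP2 implying BA hypo Proposition} shows $U^*ABU=BA$, equivalently $AB=U(BA)U^*$; since $BA$ is co-hyponormal and conjugation by a unitary preserves co-hyponormality, $AB$ is co-hyponormal. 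So now $AB$ is both hyponormal and co-hyponormal, hence normal; and $BA$ is both hyponormal (just shown) and co-hyponormal (hypothesis), hence normal. This direction is essentially immediate once the polar-decomposition identity $AB=U(BA)U^*$ is in hand.

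For the reverse implication ``$BA,AB$ normal $\Rightarrow AA^*B=BA^*A$'': I would again write $A=PU=UP$ with $P\geq 0$, $U$ unitary. The goal is $P^2B=BP^2$, equivalently $B$ commutes with $P$ (take square roots, as in the proof of Proposition~\ref{normal A hypo Ab P2B=BP2 implying BA hypo Proposition}). The natural tool is Fuglede--Putnam: from $AB$ normal I would like an intertwining relation linking $A$ (or $P$) with $B$. One route: $AB=U(PB)$ and, since $A$ normal gives $U^*AU=A$... more usefully, consider the relation $A(AB)=(BA)A$ which holds trivially, and $(AB)A = A(BA)$; these intertwine the two normal operators $AB$ and $BA$ via $A$. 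Since $AB$ and $BA$ are both normal and $A(BA)=(AB)A$, Fuglede--Putnam yields $A^*(BA)=(AB)A^*$, and combining the two gives $(AA^*)(BA)=(AB)(AA^*)$ — but I need to massage this into $AA^*B=BAA^*$. Alternatively, and more robustly, I would use the unitary $U$: from $AB=U(BA)U^*$ (established above, this identity needs only $A$ normal, not the commutation hypothesis) and $BA$ normal, normality of $AB$ is automatic, so that gives no information; the real content must come from comparing $AB$ with $A$ directly. So the cleaner path is: apply Fuglede--Putnam to the pair of normal operators $AB$ and $A$ using some bounded intertwiner. We have $A\cdot B\cdot A = (AB)A$ and $A\cdot(BA)=A B A$, giving the intertwining $(AB)A = A(BA)$; since $AB$ is normal, Fuglede--Putnam (asymmetric form) with $T=AB$, $S=BA$ both normal and intertwiner $A$ gives $(AB)^*A = A(BA)^*$. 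Multiplying the pair $(AB)A=A(BA)$ on appropriate sides by these adjoint relations and using $A$ normal, one extracts that $A^*A$ commutes with $B$; I would then take the positive square root to conclude $PB=BP$, hence $AA^*B=P^2B=BP^2=BA^*A$.

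The main obstacle I anticipate is the bookkeeping in the reverse direction: getting from the Fuglede--Putnam-generated identities $(AB)A=A(BA)$ and $(AB)^*A=A(BA)^*$ down to the clean statement ``$B$ commutes with $AA^*$'' requires carefully exploiting normality of $A$ to cancel the unitary part $U$, and one must be sure the argument does not secretly need $BA$ to be normal in a place where only hyponormality is available (here it is fine, since both $AB$ and $BA$ are assumed normal). A subsidiary point to check is that conjugation by a unitary genuinely sends hyponormal to hyponormal and co-hyponormal to co-hyponormal — this is routine from $\|U X U^* x\| = \|X U^* x\|$, but it underpins the forward direction. Once these are nailed down, both implications assemble from results already in the paper.
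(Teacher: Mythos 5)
Your proposal is correct and follows essentially the same route as the paper: the forward direction via the polar decomposition $A=UP=PU$, the commutation $PB=BP$, and the conjugation identity $AB=U(BA)U^*$ used to transfer co-hyponormality to $AB$ and hyponormality to $BA$; the reverse direction via Fuglede--Putnam applied to $A(BA)=(AB)A$ with $AB$ and $BA$ normal. Two small remarks: your parenthetical claim that $AB=U(BA)U^*$ ``needs only $A$ normal'' is false (it uses $PB=BP$, i.e., the commutation hypothesis---fortunately you discard that branch), and no further ``multiplying on appropriate sides'' is needed at the end, since $(AB)^*A=A(BA)^*$ already reads $B^*A^*A=AA^*B^*$, whose adjoint combined with $AA^*=A^*A$ is exactly $AA^*B=BA^*A$.
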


\begin{proof}\hfill
\begin{enumerate}
  \item "$\Longrightarrow$": Since $A$ is normal, we have $A=UP=PU$
  where $U$ is unitary and $P$ is positive. Since $AA^*B=BA^*A$, we
  obtain $P^2B=BP^2$  or just $BP=PB$ by the positivity of $P$.

  Therefore, we may write
  \begin{align*}
  (AB)^*AB=&(U(BA)U^*)^*(U(BA)U^*)\\
  =&U(BA)^*U^*U(BA)U^*\\
  =&U(BA)^*(BA)U^*\\
  \leq &U(BA)(BA)^*U^* \text{ (since $BA$ is co-hyponormal and by Lemma \ref{Lemma A>B UAU*>UBU*})}\\
  =&U(BA)U^*U(BA)^*U^*\\
  =&(AB)(AB)^*,
  \end{align*}
  that is $AB$ is co-hyponormal. Since it is already hyponormal, it
  immediately follows that $AB$ is normal.

  To prove that $BA$ is normal we apply a similar idea and we have
\begin{align*}
  (BA)(BA)^*=&(U^*(AB)U)(U^*(AB)U)^*\\
  =&U^*(AB)^*UU^*(AB)^*U\\
  =&U^*(AB)(AB)^*U\\
  \leq &U^*(AB)^*(AB)U \text{ (since $AB$ is hyponormal and by Lemma \ref{Lemma A>B UAU*>UBU*})}\\
  =&U^*(AB)^*UU^*(AB)U\\
  =&(BA)^*(BA),
  \end{align*}
  that is $BA$ is hyponormal, and since it is also co-hyponormal, we conclude that $BA$ is normal.
  \item "$\Longleftarrow$": To prove the the reverse implication, we
  use the celebrated Fuglede-Putnam theorem (see e.g. \cite{CON}) and we have:
  \begin{align*}
  ABA=ABA&\Longrightarrow A(BA)=(AB)A\\
  &\Longrightarrow A(BA)^*=(AB)^*A\\
  &\Longrightarrow AA^*B^*=B^*A^*A\\
  &\Longrightarrow BAA^*=A^*AB.
  \end{align*}
  This completes the proof.
\end{enumerate}
\end{proof}

\section{Main Results: The Unbounded Case}

We start this section by giving a counterexample that shows that the
same assumptions, as in Theorem \ref{Kapl-bounded}, would not yield
the same results if $B$ is an unbounded operator, let alone the case
where both $A$ and $B$ are unbounded.

What we want is a normal bounded operator $A$ and an unbounded (and
closed) operator $B$ such that $BA$ is normal, $A^*AB\subset BA^*A$
but $AB$ is not normal.
\begin{exa}\label{example counterexample main}
Let
\[Bf(x)=e^{x^2}f(x) \text{ and } Af(x)=e^{-x^2}f(x)\]
on their respective domains
\[D(B)=\{f\in L^2(\R):~e^{x^2}f\in L^2(\R)\} \text{ and } D(A)=L^2(\R).\]
Then it is well known that $A$ is bounded and self-adjoint (hence
normal), and that $B$ is self-adjoint (hence closed).

Now $AB$ is not normal for it is not closed as $AB\subset I$. $BA$
is normal as $BA=I$ (on $L^2(\R)$). Hence $AB\subset BA$ which
implies that
\[AAB\subset ABA\Longrightarrow AAB\subset ABA\subset BAA.\]
\end{exa}

Now, we state and prove the generalization of Theorem
\ref{Kapl-bounded} to unbounded operators. We have

\begin{thm}
Let $B$ be an unbounded closed operator and $A$ a bounded one such
that $AB$ and $A$ are normal.  Then
\[BA \text{ normal } \Longrightarrow A^*AB\subset BA^*A.\]
If further $BA$ is hyponormal (resp. subnormal), then

\[BA \text{ normal} \Longleftarrow A^*AB\subset BA^*A.\]
\end{thm}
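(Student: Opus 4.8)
The plan is to mimic, as closely as the unbounded setting allows, the bounded argument already used in Proposition \ref{normal A hypo Ab P2B=BP2 implying BA hypo Proposition} and Theorem \ref{main theorem hyponormal cohyponormal}, exploiting the polar decomposition $A=UP=PU$ of the bounded normal operator $A$, where $U$ is unitary and $P$ is positive. The crucial structural fact is that $U$ is \emph{bounded and boundedly invertible}, so conjugation by $U$ is a $*$-automorphism that preserves closedness, normality, hyponormality and subnormality, and also behaves well with respect to domains of unbounded operators (e.g. $D(UBU^*)=UD(B)$, and $(UBU^*)^*=UB^*U^*$ since $U$ is unitary). This is what will let me transport statements between $AB$ and $BA$.

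For the first implication, I would start from $BA$ normal and try to deduce $A^*AB\subset BA^*A$, i.e. $P^2B\subset BP^2$. Here the forward direction of Theorem \ref{main theorem hyponormal cohyponormal} suggests using Fuglede--Putnam; indeed one expects an asymmetric Fuglede--Putnam argument. The identity $A(BA)\subset(AB)A$ holds because $A$ is bounded (products associate freely when the outer factor is bounded), and $AB$ normal, $BA$ normal plus the intertwining $A(BA)\subset(AB)A$ should, via the unbounded Fuglede--Putnam theorem (valid when both operators are normal and one intertwiner is bounded), give $A(BA)^*\subset(AB)^*A$. Unwinding this with $(BA)^*=A^*B^*$ (again $A$ bounded) and $(AB)^*\supset B^*A^*$ yields $AA^*B^*\subset B^*A^*A$, and taking adjoints carefully — using that $A=A^{**}$ and the bounded-factor rules for adjoints of products — should land on $A^*AB\subset BA^*A$. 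I would double-check every $\subset$ versus $=$ at this stage, since that is exactly where unbounded subtleties bite.

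For the converse, assume $A^*AB\subset BA^*A$, i.e. $P^2B\subset BP^2$, and additionally that $BA$ is hyponormal (resp. subnormal). From $P^2B\subset BP^2$ and positivity of $P$ one gets $PB\subset BP$ (via the square root being a norm-limit of polynomials in $P^2$, or the Heinz-type argument; since $P$ is bounded this is legitimate). Then, exactly as in the bounded proof, $U^*(AB)U=U^*UPBU\subset PBU=BA$ on the appropriate domain, and since $U$ is unitary this inclusion is actually an equality of closed operators: $BA=U^*(AB)U$. Now $AB$ is normal by hypothesis, hence $U^*(AB)U$ is normal, hence $BA$ is normal — wait, that would make the hyponormality/subnormality hypothesis superfluous, so the real direction must be the other transport: from $BA$ (hypo/sub)normal and $BA=U^*(AB)U$ one concludes $AB$ is (hypo/sub)normal via the same conjugation, and then apply Stochel's Theorem \ref{STOCHEL-ASYMMETRIC} to the intertwining $XT^*\subset SX$ with a suitable bounded injective $X$ with dense range (built from $U$ and $P$, or simply $X=$ a compact injective operator commuting with $P$) to force normality of $BA$. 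Concretely I would set up $X$ so that $X(BA)^*\subset (AB)X$ or its adjointed form, with $\ker X=\ker X^*=\{0\}$, invoke Stochel to get both $AB$ and $BA$ normal.

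The main obstacle, I expect, is precisely the bookkeeping of operator domains and the direction of inclusions: deducing $PB\subset BP$ from $P^2B\subset BP^2$ for an \emph{unbounded} $B$, and then ensuring that the chain $U^*(AB)U \subset PBU = BA$ can be upgraded to genuine equality of \emph{closed} operators (which needs $AB$ or $BA$ closed, available since $BA$ is closed when $B$ is closed and $A$ bounded, and $AB$ is normal hence closed). A secondary obstacle is verifying the hypotheses of Stochel's theorem — constructing the injective bounded $X$ with dense range that intertwines $(BA)^*$ and $AB$ in the exact form $XT^*\subset SX$; the natural candidate is something like $X=P$ composed with a strictly positive trace-class operator commuting with $P$, but one must check the intertwining survives. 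If Stochel is unavailable in the needed form, the fallback is to use normality of $AB$ directly: since $BA=U^*(AB)U$ with $U$ unitary and $AB$ normal, $BA$ is automatically normal, in which case the extra hypothesis is only needed to guarantee that $P^2B\subset BP^2$ genuinely forces $PB\subset BP$ rather than a weaker one-sided statement — and that is where I would concentrate the careful argument.
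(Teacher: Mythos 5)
Your forward direction matches the paper's: apply the (unbounded) Fuglede--Putnam theorem to the intertwining $A(BA)=(AB)A$ and unwind the adjoints. That part is fine.

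The converse is where your proposal has a genuine gap, and it sits exactly at the point you flagged as confusing. The inclusion $U^*(AB)U\subset PBU\subset BPU=BA$ (which you correctly obtain from $P^2B\subset BP^2\Rightarrow PB\subset BP$) is \emph{not} automatically an equality ``since $U$ is unitary'': conjugation by $U$ preserves closedness, but a closed --- even normal --- operator can have proper closed extensions, so $U^*(AB)U\subsetneq BA$ is a priori possible. This is precisely why the hyponormality (resp.\ subnormality) hypothesis on $BA$ is not superfluous, and your ``fallback'' at the end, which rests on the unproved equality $BA=U^*(AB)U$, does not work. Having backed away from the false shortcut, you then search for an intertwiner $X$ for Stochel's theorem among trace-class or compact operators built from $P$; but the correct choice is staring at you from the inclusion you already derived. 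Rewriting $U^*(AB)U\subset BA$ as $U^*\bigl((AB)^*\bigr)^*\subset (BA)U^*$ puts it in the exact form $XT^*\subset SX$ of Theorem \ref{STOCHEL-ASYMMETRIC} with $X=U^*$ (bounded, $\ker X=\ker X^*=\{0\}$ since $U$ is unitary), $T=(AB)^*$ (closed and subnormal, being normal), and $S=BA$ (closed because $B$ is closed and $A$ is bounded, and hyponormal by hypothesis). Stochel's theorem then yields the normality of $S=BA$ directly; the subnormal case is handled by the symmetric version of the same theorem. Without pinning down this specific instantiation --- which is the heart of the paper's argument --- your proof does not close.

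(Incidentally, your instinct that something more elementary might suffice in the hyponormal case is sound: since $N:=U^*(AB)U$ is normal and $N\subset BA$ with $BA$ hyponormal, one gets $(BA)^*\subset N^*$, hence $D(BA)\subset D((BA)^*)\subset D(N^*)=D(N)\subset D(BA)$, forcing $BA=N$. But that is a maximality argument using the hyponormality of $BA$, not a consequence of unitarity of $U$, and it is not the route you actually wrote down.)
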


\begin{proof}\hfill
\begin{enumerate}
  \item "$\Longrightarrow$": Since $AB$ and $BA$ are normal, the
  equation
  \[A(BA)=(AB)A\]
  implies that
  \[A(BA)^*=(AB)^*A\]
  by the Fuglede-Putnam theorem (see e.g. \cite{CON}). Hence
  \[AA^*B^*\subset B^*A^*A \text{ or } A^*AB\subset BA^*A.\]
  \item "$\Longleftarrow$": The idea of proof in this case is similar in core to Kaplansky's (cf. \cite{Kapl}). Let $A=UP$ be the polar decomposition of
  $A$, where $U$ is unitary and $P$ is positive (remember that they also
  commute and that $P=\sqrt{A^*A}$), then one may write
  \[U^*ABU=U^*UPBU=PBU\subset BP U=BA\]
  or
  \[U^*AB=U^*\overline{AB}=U^*((AB)^*)^*\subset BA U^*\]
  (by the closedness of $AB$). Since $(AB)^*$ is normal, it is
  closed and subnormal. Since $B$ is closed and $A$ is bounded, $BA$
  is closed. Since it is hyponormal, Theorem \ref{STOCHEL-ASYMMETRIC} applies and
  yields the normality of $BA$ as $U$ is invertible.

  The proof is very much alike in the case of subnormality.
\end{enumerate}
\end{proof}

Imposing another commutativity condition allows us to generalize
Theorem \ref{Kapl-bounded} to unbounded normal operators by
bypassing hyponormality and subnormality:

\begin{thm}\label{kaplansky all unbounded just normals}
Let $B$ be an unbounded closed operator and $A$ a bounded one such
that all of $AB$, $A$ and $B$ are all normal.  Then
\[A^*AB\subset BA^*A \text{ and } AB^*B\subset
B^*BA \Longrightarrow BA \text{ normal } .\]
\end{thm}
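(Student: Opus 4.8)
The goal is to show that $BA$ is normal, i.e. that $BA$ is closed and $(BA)(BA)^*=(BA)^*(BA)$. Since $B$ is closed and $A$ is bounded, $BA$ is automatically closed, so the real content is the operator equality. The natural strategy is to mimic the bounded proof of Proposition \ref{normal A hypo Ab P2B=BP2 implying BA hypo Proposition} and Theorem \ref{main theorem hyponormal cohyponormal}: use the polar decomposition $A=UP=PU$ with $U$ unitary and $P=\sqrt{A^*A}$ positive (and bounded), conjugate the normal operator $AB$ by $U$, and transfer its normality to $BA$. The two commutativity hypotheses are there precisely to let this conjugation go through for \emph{unbounded} $B$, where domain issues are delicate.

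\medskip

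\textbf{Key steps.} First I would extract from $A^*AB\subset BA^*A$, i.e. $P^2B\subset BP^2$, the commutation $PB\subset BP$. This is the standard fact that a bounded positive operator which "commutes" with a closed operator in the sense $P^2B\subset BP^2$ also satisfies $PB\subset BP$; it follows by approximating $P$ by polynomials in $P^2$ (or by using that $P$ lies in the von Neumann algebra generated by $P^2$ together with a limiting argument on $D(B)$). Consequently, on $D(BA)=D(B)$ (since $A$ is bounded and everywhere defined, $D(BA)=A^{-1}D(B)=D(B)$ as $A$ is a bijection... more carefully $D(BA)=\{x:Ax\in D(B)\}$), one gets
\[
U^*(AB)U = U^*UPBU = PBU \subset BPU = BA,
\]
so $U^*(AB)U\subset BA$. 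Second, I would use the \emph{other} hypothesis $AB^*B\subset B^*BA$ to run the symmetric argument with $B^*$ in place of $B$, or better, to control the adjoint: since $AB$ is normal, $(AB)^*=\overline{B^*A^*}$ and $BA$ is closed, I want the reverse inclusion $BA\subset U^*(AB)U$, which together with the first inclusion gives the operator \emph{equality} $U^*(AB)U=BA$. Normality is preserved under unitary conjugation, so $BA$ would then be normal. The role of $AB^*B\subset B^*BA$ is to guarantee that the domains match up — i.e. that no proper extension occurs — so that we genuinely get equality rather than mere inclusion.

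\medskip

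\textbf{Main obstacle.} The hard part is the domain bookkeeping needed to upgrade the inclusion $U^*(AB)U\subset BA$ to an equality; in the unbounded setting, an inclusion of a normal operator in a closed operator need not be an equality, and one must use maximality of normal operators (a normal operator has no proper normal extension, and more relevantly: if $N\subset T$ with $N$ normal and $T$ closed and, say, $T$ formally normal or $T^*\subset N^*$, then $N=T$). So the crux is to show that $BA$ is \emph{symmetric enough} — via the hypothesis $AB^*B\subset B^*BA$ applied to extract $PB^*\subset B^*P$ and hence $U^*(AB)^*U\supset (BA)^*$ or $\subset$, whichever direction closes the loop — to force $BA=U^*(AB)U$. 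Once that equality is in hand, normality of $BA$ is immediate, and closedness was free from the start; the reverse-direction hypotheses are never needed since this theorem only claims one implication.
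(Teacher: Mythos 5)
There is a genuine gap, and you have in fact located it yourself without closing it. Your first half is fine: from $A^*AB\subset BA^*A$, i.e. $P^2B\subset BP^2$, the closedness of $B$ and polynomial approximation of $\sqrt{\cdot}$ do give $PB\subset BP$, hence $U^*(AB)U=PBU\subset BPU=BA$, so $BA$ is a closed extension of a normal operator. But that is exactly where the argument stalls: a normal operator can have proper closed extensions that are not normal, so $N\subset BA$ with $N$ normal and $BA$ closed proves nothing by itself. (This is precisely why the paper's \emph{preceding} theorem, which runs the same conjugation $U^*ABU\subset BA$, has to add the hypothesis that $BA$ is hyponormal or subnormal and then invoke Stochel's asymmetric Fuglede--Putnam theorem; here no such hypothesis is available.) Your proposed fix — extracting $PB^*\subset B^*P$ from $AB^*B\subset B^*BA$ — does not do the job: first, $PB^*\subset B^*P$ already follows from the \emph{first} hypothesis by taking adjoints of $PB\subset BP$, so it cannot be the extra input the second hypothesis is meant to supply; second, it does not yield the reverse inclusion $BA\subset U^*(AB)U$, since $D(BA)=\{x:UPx\in D(B)\}$ and $D(U^*(AB)U)=\{x:Ux\in D(B)\}$ need not coincide (e.g. any $x\in\ker P$ lies in $D(BA)$ but need not satisfy $Ux\in D(B)$). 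The phrase ``whichever direction closes the loop'' is an acknowledgement that the decisive step is missing.

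For contrast, the paper does not use the polar decomposition here at all. It computes the two self-adjoint operators $BA(BA)^*$ and $(BA)^*BA$ directly: the first hypothesis gives $A^*ABB^*\subset BA^*AB^*=BAA^*B^*\subset BA(BA)^*$, whence by adjoining $BA(BA)^*\subset BB^*A^*A$; the second gives $A^*AB^*B\subset A^*B^*BA\subset(BA)^*BA$, whence $(BA)^*BA\subset B^*BA^*A$. The Devinatz--Nussbaum--von Neumann maximality result (a self-adjoint operator contained in a product of two self-adjoint operators equals that product) then forces both to \emph{equal} $BB^*A^*A$, using the normality of $B$ to identify $B^*B$ with $BB^*$. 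If you want to salvage your approach, you would need an analogous maximality input — e.g. first proving $BA$ hyponormal so that the normal operator $U^*(AB)U$ is a maximal hyponormal restriction — but as written the proposal does not prove the theorem.
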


The proof is partly based on the following interesting result of
maximality of self-adjoint operators:

\begin{pro}[Devinatz-Nussbaum-von Neumann, \cite{DevNussbaum-von-Neumann}]\label{Devinatz-Nussbaum-von Neumann: T=T1T2}
Let $A$, $B$ and $C$ be unbounded self-adjoint operators. Then
\[A\subseteq BC \Longrightarrow A=BC.\]
\end{pro}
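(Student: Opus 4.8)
The plan is to reduce the normality of $BA$ to an equality between the two self-adjoint operators $(BA)^*(BA)$ and $(BA)(BA)^*$, and to obtain that equality by squeezing both of them inside one and the same product of self-adjoint operators, to which Proposition \ref{Devinatz-Nussbaum-von Neumann: T=T1T2} then applies. I set $P^2=A^*A=AA^*$ (bounded self-adjoint, since $A$ is normal) and $Q^2=B^*B=BB^*$ (self-adjoint by von Neumann's theorem, since $B$ is closed and densely defined, and $B$ is normal). The two hypotheses then read $P^2B\subset BP^2$ and $AQ^2\subset Q^2A$. Since $A$ is bounded, $BA$ is closed and $(BA)^*=A^*B^*$ with $D((BA)^*)=D(B^*)$. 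To legitimize this adjoint I first check that $BA$ is densely defined: from $AQ^2\subset Q^2A$ we get $A(D(Q^2))\subset D(Q^2)\subset D(B)$, hence $D(Q^2)\subset D(BA)$, and $D(Q^2)=D(B^*B)$ is dense. Consequently $(BA)^*(BA)$ and $(BA)(BA)^*$ are self-adjoint, again by von Neumann's theorem.

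Next I would promote the two hypotheses to their adjoint forms by pure adjoint algebra, using that $A$, $A^*$, $P^2$ are bounded together with the product rules recalled in the introduction. Taking adjoints in $P^2B\subset BP^2$ yields $P^2B^*\subset B^*P^2$, and taking adjoints in $AQ^2\subset Q^2A$ yields $A^*Q^2\subset Q^2A^*$; in each case one merely chains the general inclusion $(XY)^*\supset Y^*X^*$ with the equality $(XY)^*=Y^*X^*$ valid when the left factor is bounded (for instance $A^*Q^2\subset(Q^2A)^*\subset(AQ^2)^*=Q^2A^*$).

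Now, tracking domains carefully, I would identify $(BA)^*(BA)=A^*B^*BA=A^*Q^2A$ and $(BA)(BA)^*=BAA^*B^*=BP^2B^*$ as genuine equalities of operators, and then feed in the commutations: using $A^*Q^2\subset Q^2A^*$ together with $A^*A=P^2$ gives $A^*Q^2A\subset Q^2P^2$, while using $P^2B^*\subset B^*P^2$ together with $BB^*=Q^2$ gives $BP^2B^*\subset Q^2P^2$. Thus both $(BA)^*(BA)$ and $(BA)(BA)^*$ are self-adjoint operators contained in the single product $Q^2P^2$ of the two self-adjoint operators $Q^2$ and $P^2$. Proposition \ref{Devinatz-Nussbaum-von Neumann: T=T1T2} then upgrades each inclusion to an equality, giving $(BA)^*(BA)=Q^2P^2=(BA)(BA)^*$; hence $(BA)^*(BA)=(BA)(BA)^*$, and since $BA$ is closed and densely defined, $BA$ is normal.

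The main obstacle I anticipate is not the invocation of Proposition \ref{Devinatz-Nussbaum-von Neumann: T=T1T2}, which is the clean punchline, but the domain bookkeeping in the middle step. One must verify that the inclusions $A^*Q^2A\subset Q^2P^2$ and $BP^2B^*\subset Q^2P^2$ hold as inclusions of operators, i.e.\ as equality of actions on a correctly identified domain, and that the identifications $(BA)^*(BA)=A^*Q^2A$ and $(BA)(BA)^*=BP^2B^*$ are genuine equalities and not mere inclusions, so that the self-adjointness furnished by von Neumann's theorem is really carried by the very operators to which Proposition \ref{Devinatz-Nussbaum-von Neumann: T=T1T2} is applied.
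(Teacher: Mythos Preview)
Your proposal does not prove the stated Proposition. The Devinatz--Nussbaum--von Neumann result asserts that if $A$, $B$, $C$ are self-adjoint and $A\subseteq BC$, then $A=BC$; the paper does not prove this but quotes it from \cite{DevNussbaum-von-Neumann} as a black-box tool. What you have written is instead an argument for Theorem~\ref{kaplansky all unbounded just normals}, the result that $BA$ is normal under the two commutation hypotheses. Indeed, you explicitly \emph{invoke} Proposition~\ref{Devinatz-Nussbaum-von Neumann: T=T1T2} in your final paragraph, so you cannot possibly be proving it.

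If your actual target was Theorem~\ref{kaplansky all unbounded just normals}, then your approach coincides with the paper's. Both arguments show that each of the self-adjoint operators $(BA)^*(BA)$ and $(BA)(BA)^*$ is contained in the single product $BB^*\,A^*A$ of two self-adjoint operators, and then invoke the Devinatz--Nussbaum--von Neumann maximality to upgrade each inclusion to an equality. The paper reaches the inclusions by first chaining the commutation hypotheses and then taking an adjoint of the resulting inclusion; you instead identify $(BA)^*(BA)=A^*Q^2A$ and $(BA)(BA)^*=BP^2B^*$ first and then apply the (adjointed) commutations directly. Your version also supplies an explicit density check for $D(BA)$ that the paper glosses over, but the strategy and the key lemma are identical.
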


Now we give the proof of Theorem \ref{kaplansky all unbounded just
normals}:

\begin{proof}First, $BA$ is closed as $A$ is bounded and $B$ is
closed. So $BA(BA)^*$ (and $(BA)^*BA$) is self-adjoint. Then we have
\[A^*ABB^*\subset BA^*AB^*=BAA^*B^*\subset BA(BA)^*\]
and hence
\[BA(BA)^*\subset (A^*ABB^*)^*=BB^*A^*A\]
so that Proposition \ref{Devinatz-Nussbaum-von Neumann: T=T1T2}
gives us
\[BA(BA)^*=BB^*A^*A\]
for both $BB^*$ and $A^*A$ are self-adjoint since $B$ is closed and
$A$ is bounded respectively.

 Similarly
\[A^*AB^*B\subset A^*B^*BA\subset (BA)^*BA.\]
Adjointing the previous "inclusion" and applying again Proposition
\ref{Devinatz-Nussbaum-von Neumann: T=T1T2} yield
\[(BA)^*BA=B^*BA^*A=BB^*A^*A,\]
establishing the normality of $BA$.
\end{proof}

\bibliographystyle{amsplain}

\end{document}